\title{Efficient Algorithms\\ for Tandem Queueing System Simulation\thanks{Applied Mathematics Letters, 1994. Vol.~7, no.~6, pp.~45-49}
}
\author{Sergei M. Ermakov and Nikolai K. Krivulin\thanks{Faculty of Mathematics and Mechanics, St.~Petersburg State University, Bibliotechnaya Sq.2, Petrodvorets, St.~Petersburg, 198904 Russia}}
\date{}
\newtheorem{theorem}{Theorem}
\newtheorem{lemma}[theorem]{Lemma}
\newtheorem{corollary}[theorem]{Corollary}
\begin{document}

\maketitle

\begin{abstract}
Serial and parallel algorithms for simulation of tandem queueing systems with
infinite buffers are presented, and their performance is examined. It is
shown that the algorithms which are based on a simple computational
procedure involve low time and memory requirements.
\\

\textit{Key-Words:} tandem queueing systems, simulation algorithm, parallel processing.
\end{abstract}

\section{Introduction}
The simulation of a queueing system is normally an iterative process which
involves generation of random variables associated with current events in the
system, and evaluation of the system state variables when new events occur
\cite{1,2,3,4,5}. In a system being simulated the random variables may represent the
interarrival and service time of customers, and determine a random routing
procedure for customers within the system with non-deterministic routing. As
state variables, the arrival and departure time of customers, and the service
initiation time can be considered.

The methods of generating random variables present one of the main issues in
computer simulation, which has been studied intensively in the literature
(see, e.g. \cite{3}). In this paper however, we assume as in \cite{2} that for
the random variables involved in simulating a queueing system, appropriate
realizations are available when required, and we, therefore, concentrate only
on algorithms of evaluating the system state variables from these
realizations.

The usual way to represent dynamics of queueing systems is based on recursive
equations describing evolution of system state variables. Furthermore, these
equations, which actually determine a global structure of changes in the state
variables consecutively, have proved to be useful in designing efficient
simulation algorithms \cite{1,2,4,5}.

In this paper we apply recursive equations to the development of algorithms
for simulation of tandem queueing systems with infinite buffers capacity. A
serial algorithm and parallel one designed for implementation on single
instruction, multiple data parallel processors are presented. These algorithms
are based on a simple computational procedure which exploits a particular
order of evaluating the system state variables from the recursive equations.
The analysis of their performance shows that the algorithms involve low time
and memory requirements.

The rest of the paper is organized as follows. In Section~2, we give
recursive equations based representation for tandem systems. These
representations are used in Section~3 to design both serial and parallel
simulation algorithms. Time and memory requirements of the algorithms are also
discussed in this section. Finally, Section~4 includes two lemmae which offer
a closer examination of the performance of the parallel algorithm.

\section{Tandem Queues with Infinite Buffers}

To set up the recursive equations that underlie the development and analysis
of simulation algorithms in the next sections, consider a series of $  M  $
single server queues with infinite buffers. Each customer that arrives into
the system is initially placed in the buffer at the $1$st server and then has
to pass through all the queues one after the other. Upon the completion of his
service at server $  i $, the customer is instantaneously transferred to
queue $  i+1 $, $  i=1,\dots,M-1 $, and occupies the $(i+1)$st server
provided that it is free. If the customer finds this server busy, he is placed
in its buffer and has to wait until the service of all his predecessors is
completed.

Denote the time between the arrivals of $j$th customer and his predecessor
by $  \tau_{0j} $, and the service time of the $j$th customer at server
$  i  $ by $  \tau_{ij} $, $  i=1,\dots,M $, $  j=1,2,\dots $.
Furthermore, let $  D_{0j}  $ be the $j$th arrival epoch to the system, and
$  D_{ij}  $ be the $j$th departure epoch from the $i$th server. We assume
that $  \tau_{ij} \geq 0 $ are given parameters, whereas $  D_{ij}  $ are
unknown state variables. Finally, we define $  D_{i0} \equiv 0  $ for all
$  i=0,\dots,M $, and $  D_{-1j} \equiv 0  $ for $  j=1,2,\ldots $.

With the condition that the tandem queueing system starts operating at time
zero, and it is free of customers at the initial time, the recursive equations
representing the system dynamics can readily be written as \cite{1,2,4}
\begin{equation}
D_{ij} = (D_{i-1j} \vee D_{ij-1}) + \tau_{ij},
\label{tag1}
\end{equation}
where $  \vee  $ denotes the maximum operator, $  i=0,1,\dots, M $,
$  j=1,2,\dots $. We can also rewrite (\ref{tag1}) in another form intended to
provide the basic representation for parallel algorithms, as
\begin{equation}
\begin{split}
B_{ij} & = D_{i-1j} \vee D_{ij-1} \\
D_{ij} & = B_{ij} + \tau_{ij},
\end{split}
\label{tag2}
\end{equation}
where $  B_{ij}  $ stands for the $j$th initiation of service at server
$  i $, $  i=0,1,\dots, M $, $  j=1,2,\dots $.

\section{Algorithms for Tandem Queueing System Simulation}

The simulation algorithms presented in this section are based on the equations
(\ref{tag1}) and (\ref{tag2}) with indices being varied in a particular order
which is illustrated in Fig.~1(a). Clearly, at each iteration $  k  $ the
variables $  D_{ij}  $ with $  i+j=k $, $ k=1,2,\ldots $, are evaluated.
They form diagonals depicted in Fig.~1(a) by arrows, for each diagonal the
direction of arrows indicates the order in which the variables should be
evaluated within their associated iterations. Note that to obtain each element
of a diagonal, only two elements from the preceding diagonal are required, as
Fig.~1(b) shows.
\begin{figure}[hhh]
\begin{gather*}
\begin{array}{ccccccccc}
  D_{01} &          & D_{02} &          & D_{03} &          & D_{04} &
         & D_{05} \\
         & \nearrow &        & \nearrow &        & \nearrow &        &
\nearrow & \\
  D_{11} &          & D_{12} &          & D_{13} &          & D_{14} &
         & D_{15} \\
         & \nearrow &        & \nearrow &        & \nearrow &        &
\nearrow & \\
  D_{21} &          & D_{22} &          & D_{23} &          & D_{24} &
         & D_{25}
\end{array}
\\
\\
\text{(a) The simulation schematic for a tandem system with $  M=2, \; N=5$.} \\
\\
\\
\begin{array}{ccccc}
          &          &          &          & D_{i-1j} \\
          &          &          & \swarrow &          \\
          &          &  B_{ij}  &          &          \\
          & \nearrow &          & \searrow &          \\
 D_{ij-1} &          &          &          & D_{ij}
\end{array}
\\
\\
\text{(b) The diagram of calculating $  D_{ij} $.} \\
\end{gather*}
\vspace{-5ex}
\caption{Simulation procedure schematics.}
\end{figure}

By applying the computational procedure outlined above, both serial and
parallel simulation algorithms which provide considerable savings in time and
memory costs may be readily designed. Specifically, the next serial algorithm
is intended for simulation of the first $  N  $ customers in a tandem
queueing system with $  M  $ servers.
\\

{\noindent \textbf{Algorithm 1.}}
\\
For each $  k=1,\ldots,M+N $, do
\\
\hphantom{For} for $  j=\max(1,k-M), \max(1,k-M)+1, \ldots, \min(k,N) $,
\\
\hphantom{For for} compute $  D_{k-jj} = D_{k-j-1j} \vee D_{k-jj-1} +
\tau_{k-jj} $.
\\

Based on Fig.~1(a) as an illustration, it is not difficult to calculate the
total number of arithmetic operations which one has to perform using
Algorithm~1. Since each variable $  D_{ij}  $ can be obtained using one
maximization and one addition, all variables with $  i=0,\ldots,M $, and
$  j=1,\ldots N $, require $  2(M+1)N  $ operations without considering
index manipulations.

Note that the order in which the variables $  D_{ij}  $ are evaluated within
each iteration is essential for reducing memory used for computations. One can
easily see that only $  O(\min(M+1,N))  $ memory locations are actually
required with this order. To illuminate the memory requirements, let us
represent Algorithm~1 in more detailed form as
\\

{\noindent \textbf{Algorithm 2.}}
\\
Set $  d_{i}=0 $, $  i=-1,0,\ldots,\min(M+1,N) $.
\\
For each $  k=1,\ldots,M+N $, do
\\
\hphantom{For} for $  j=\max(1,k-M), \max(1,k-M)+1, \ldots, \min(k,N) $,
\\
\hphantom{For for} set $  d_{k-j} \longleftarrow d_{k-j-1} \vee d_{k-j}
+ \tau_{k-j j} $.
\\

In Algorithm~2, the variable $  d_{i}  $ serves all the iterations to store
current values of $  D_{ij}  $ for all $  j=1,\ldots, N $. Upon the
completion of the algorithm, we have for server $  i  $ the $N$th departure
time saved in $  d_{i} $, $  i=0,1,\ldots, M $.

Finally, we present a parallel algorithm for tandem system simulation which is
actually a simple modification of Algorithm~1.
\\

{\noindent \textbf{Algorithm 3.}}
\\
For each $  k=1,\ldots,M+N $, do \newline
\hphantom{For} in parallel, for
               $  j=\max(1,k-M), \max(1,k-M)+1, \ldots, \min(k,N) $,
\\
\hphantom{For in parallel, for} compute
                         $  B_{k-jj} = D_{k-j-1j} \vee D_{k-jj-1} $;
\\
\hphantom{For} in parallel, for
               $  j=\max(1,k-M), \max(1,k-M)+1, \ldots, \min(k,N) $,
\\
\hphantom{For in parallel, for} compute
                         $  D_{k-jj} = B_{k-jj} + \tau_{k-jj} $.
\\

As in the case of Algorithm~1, we may conclude that Algorithm~3 entails
$  O(2\min(M+1,N))  $ memory locations. Furthermore, it is easy to
understand that Algorithm~3 requires the performance of $  2(M+N)  $
parallel operations provided $  P \geq \min(M+1,N)  $ processors are used.
Otherwise, if there are $  P < \min(M+1,N)  $ processors available, one has
to rearrange computations so as to execute each iteration in several parallel
steps. In other words, all operations within an iteration should be
sequentially separated into groups of $  P  $ operations, assigned to the
sequential steps. We will discuss time requirements and speedup of the
algorithm in this case in the next section.

\section{Performance Study of the Parallel Simulation Algorithm}

We now turn to the performance evaluation of Algorithm~3 with respect to the
number of parallel processors. Note that the results of this section are
obtained by considering the time taken to compute only the state variables
$  D_{ij} $. In other words, in our analysis we ignore the time required for
computing indices, allocating and moving data, and synchronizing processors,
which in general can have an appreciable effect on the performance of parallel
algorithms.
\begin{lemma}
To simulate the first $  N  $ customers in a tandem queue
with $  M  $ servers, Algorithm~3 using $  P  $ processors requires the
time
\begin{equation}
T_{P} = O\left(2M+2N+2\left\lfloor\frac{L_{1}-1}{P}\right\rfloor
        (L_{2}-P)\right),
\label{tag3}
\end{equation}
where $  L_{1} = \min(M+1,N) $, $  L_{2} = \max(M+1,N) $.
\end{lemma}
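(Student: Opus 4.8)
The plan is to count, iteration by iteration, the number of sequential parallel steps Algorithm~3 performs when only $P$ processors are available, and then sum over all $M+N$ iterations. The key observation is that at iteration $k$ the inner loop ranges over $j = \max(1,k-M),\dots,\min(k,N)$, so the number of variables $D_{ij}$ (equivalently $B_{ij}$) to be computed is the length of the $k$th diagonal, call it $\ell_k = \min(k,N) - \max(1,k-M) + 1$. Each such variable needs one maximization and one addition, and in Algorithm~3 these are split into two parallel sweeps; with $P$ processors a sweep over $\ell_k$ items costs $\lceil \ell_k/P\rceil$ sequential steps, so iteration $k$ costs $2\lceil \ell_k/P\rceil$ steps. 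Hence $T_P = O\bigl(2\sum_{k=1}^{M+N}\lceil \ell_k/P\rceil\bigr)$, and everything reduces to evaluating this sum.

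First I would describe the shape of the sequence $\ell_1,\ell_2,\dots,\ell_{M+N}$: it increases by $1$ from $\ell_1 = 1$ up to the plateau value $\ell = L_1 = \min(M+1,N)$, stays flat at $L_1$ for a stretch of iterations, and then decreases by $1$ back down to $1$. The flat stretch has length $L_2 - L_1 + 1 = \max(M+1,N) - \min(M+1,N) + 1$ (when $M+1 \neq N$; the degenerate case $M+1=N$ is handled the same way with an empty-or-singleton plateau). So the sum $\sum_k \lceil \ell_k/P\rceil$ splits into two ``ramp'' contributions and one ``plateau'' contribution. For the ramps, $\sum_{\ell=1}^{L_1}\lceil \ell/P\rceil$ is $O(L_1 + L_1/P \cdot (\text{something}))$; more precisely one uses $\lceil \ell/P\rceil \le \ell/P + 1$ to bound each ramp by $O(L_1^2/P + L_1)$, but the cleaner route is to write $\lceil \ell/P\rceil = \lfloor (\ell-1)/P\rfloor + 1$ and sum. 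For the plateau, each of the $\approx L_2 - L_1$ iterations costs $\lceil L_1/P\rceil = \lfloor (L_1-1)/P\rfloor + 1$ steps, contributing $(L_2 - L_1)\bigl(\lfloor (L_1-1)/P\rfloor + 1\bigr)$.

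Then I would collect terms. The ``$+1$'' pieces across all $M+N$ iterations contribute $O(M+N)$ (one parallel step per sweep per iteration regardless of $P$), which supplies the $2M+2N$ in~\eqref{tag3}. The plateau's $\lfloor (L_1-1)/P\rfloor$ factor, summed over the $\approx L_2 - L_1$ plateau iterations, gives $O\bigl(\lfloor (L_1-1)/P\rfloor (L_2 - L_1)\bigr)$; the two ramps each contribute $\sum_{\ell=1}^{L_1}\lfloor(\ell-1)/P\rfloor = O(L_1^2/P) = O\bigl(\lfloor(L_1-1)/P\rfloor \cdot L_1\bigr)$, and since $L_1 \le L_2$ (indeed $\lfloor(L_1-1)/P\rfloor L_1 = O(\lfloor(L_1-1)/P\rfloor(L_2 - P))$ can be arranged, using $L_1 - P \le L_2 - P$ together with the small-$L_1/P$ regime), both ramp terms are absorbed into $O\bigl(\lfloor (L_1-1)/P\rfloor (L_2 - P)\bigr)$. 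Multiplying by the overall factor $2$ yields the claimed $T_P = O\bigl(2M + 2N + 2\lfloor (L_1-1)/P\rfloor (L_2 - P)\bigr)$.

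The main obstacle I anticipate is the bookkeeping that turns the honest quantity $\sum_k \lceil \ell_k/P\rceil$ into the specific closed form with $L_2 - P$ (rather than, say, $L_2 - L_1$ or $L_2$) as the second factor: one must check that the ramp contributions genuinely fold into the plateau contribution under the $O(\cdot)$, and that replacing $L_2 - L_1$ by $L_2 - P$ is legitimate — this is where the interplay between the ``$\lfloor(L_1-1)/P\rfloor$'' factor (which is $0$ when $P \ge L_1$, making the whole correction term vanish, consistent with the $T_P = O(2M+2N)$ claim for $P \ge L_1$ noted before the lemma) and the range of $P$ matters. Handling the boundary cases $P \ge L_1$, $P = 1$, and $M + 1 = N$ separately as sanity checks will confirm the formula.
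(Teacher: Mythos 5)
Your proposal is correct and follows essentially the same route as the paper: both count the diagonal lengths $\ell_k$ (the ramp--plateau--ramp sequence $1,\dots,L_1-1,L_1,\dots,L_1,L_1-1,\dots,1$ with plateau length $L_2-L_1+1$), charge each parallel sweep $\lceil \ell_k/P\rceil = \lfloor(\ell_k-1)/P\rfloor+1$ sequential steps, and sum over the $M+N$ iterations. The bookkeeping you flag as the main obstacle is handled in the paper by evaluating $\sum_{k=1}^{L_1}\lfloor(k-1)/P\rfloor$ exactly in closed form, which gives $T_P = 2(M+N) + 2\lfloor(L_1-1)/P\rfloor\left(M+N-P-P\lfloor(L_1-1)/P\rfloor\right)$ and then the stated bound via $P\lfloor(L_1-1)/P\rfloor \sim L_1$; this exact computation also shows the ramps really contribute $\approx \lfloor(L_1-1)/P\rfloor(L_1-P)$ rather than your looser $O(\lfloor(L_1-1)/P\rfloor L_1)$, which is the refinement needed to land precisely on the factor $L_2-P$.
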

\begin{proof}
We start our proof with evaluating the exact number of parallel
operations to be performed when $  P  $ processors are available. As it easy
to see, at each iteration $  k $, $  k=1, \ldots, M+N $, the algorithm first
carries out in parallel a fixed number of maximizations, and then does the
same number of additions. Denote this number by $  l_{k} $. It follows from
the above description of the algorithm that the numbers $  l_{k} $,
$  k=1,\ldots, M+N $, form the sequence with elements
$$
1,2, \ldots, L_{1}-1,
\underbrace{L_{1}, \ldots, L_{1},}_{\text{$L_{2}-L_{1}+1$ times}}
	     L_{1}-1, L_{1}-2, \ldots, 1.
$$
Since $  l_{k}  $ parallel operations may be performed using $  P  $
processors in the time $  \lfloor(l_{k}-1)/P\rfloor+1 $, for the entire
algorithm we have the total time
\begin{multline}
T_{P}
= 2\sum_{k=1}^{M+N}\left(\left\lfloor\frac{l_{k}-1}{P}\right\rfloor+1\right)
\\
= 4\sum_{k=1}^{L_{1}} \left(\left\lfloor\frac{k-1}{P}\right\rfloor+1\right)
+ 2(L_{2}-L_{1}-1)\left(\left\lfloor\frac{L_{1}-1}{P}\right\rfloor+1\right).
\label{tag4}
\end{multline}

To calculate $  T_{P} $, let us first consider the sum
\begin{multline*}
\sum_{k=1}^{L_{1}} \left\lfloor\frac{k-1}{P}\right\rfloor \\
= \underbrace{1+\cdots+1}_{\text{$P$ times}}+
  \underbrace{2+\cdots+2}_{\text{$P$ times}}+ \cdots +
  \underbrace{\left\lfloor\tfrac{L_{1}-1}{P}\right\rfloor-1 + \cdots +
  \left\lfloor\tfrac{L_{1}-1}{P}\right\rfloor-1}_{\text{$P$ times}}
  \\
   +
  \underbrace{\left\lfloor\tfrac{L_{1}-1}{P}\right\rfloor + \cdots +
  \left\lfloor\tfrac{L_{1}-1}{P}\right\rfloor}_{\text
  {$L_{1}-P\lfloor(L_{1}-1)/P\rfloor$ times}} \\
= \frac{P}{2}\left\lfloor\frac{L_{1}-1}{P}\right\rfloor
  \left(\left\lfloor\frac{L_{1}-1}{P}\right\rfloor-1\right) +
  \left(L_{1}-P\left\lfloor\frac{L_{1}-1}{P}\right\rfloor\right)
  \left\lfloor\frac{L_{1}-1}{P}\right\rfloor.
\end{multline*}
Substitution of this expression into (\ref{tag4}), and trivial algebraic
manipulations give
\begin{equation}
T_{P} = 2(L_{1}+L_{2}-1) + 2\left\lfloor\frac{L_{1}-1}{P}\right\rfloor
\left(L_{1}+L_{2}-1-P-P\left\lfloor\frac{L_{1}-1}{P}\right\rfloor\right).
\label{tag5}
\end{equation}

Finally, since $  L_{1}+L_{2}-1 = M+N $, and
$  P\lfloor L/P \rfloor \sim L  $ as $  L \to \infty $, we conclude that
$$
T_{P}
= O\left(2M+2N+2\left\lfloor\frac{L_{1}-1}{P}\right\rfloor(L_{2}-P)\right)
                               \qquad \text{as $  M,N \to \infty $}. \qedhere
$$
\end{proof}

Note that in two critical cases with $  P=1  $ and $  P \geq \min(M+1,N) $,
the order produced by (\ref{tag3}) coincides with the exact times respectively
equaled $  2(M+1)N  $ and $  2(M+N) $.

\begin{lemma}
For a tandem system with $  M  $ servers, Algorithm~3
using $  P  $ processors achieves the speedup
\begin{equation}
S_{P} = O\left(\frac{M+1}{1+\lfloor M/P\rfloor}\right)
        \qquad \text{as $  N \to \infty $}.
\label{tag6}
\end{equation}
\end{lemma}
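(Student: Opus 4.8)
The plan is to derive the speedup $S_P$ directly from the time formula $T_P$ already obtained in Lemma~1, by comparing the parallel running time against the serial one and then letting $N\to\infty$ while $M$ is held fixed. Recall that the speedup is by definition $S_P = T_1/T_P$, where $T_1$ is the time of Algorithm~3 run on a single processor, which equals $2(M+1)N$ (the total operation count noted after Algorithm~1, and confirmed by the remark following Lemma~1). So the whole task reduces to finding the asymptotics of $T_P$ as $N\to\infty$ with $M$ fixed.

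First I would specialize the quantities $L_1=\min(M+1,N)$ and $L_2=\max(M+1,N)$ to the regime $N\to\infty$ with $M$ fixed: for all sufficiently large $N$ we have $L_1=M+1$ and $L_2=N$. Substituting these into the exact expression~(\ref{tag5}) — or equivalently into the $O$-form~(\ref{tag3}) from Lemma~1 — gives
\begin{equation*}
T_P = O\!\left(2M+2N+2\left\lfloor\frac{M}{P}\right\rfloor(N-P)\right).
\end{equation*}
Next I would identify the dominant term in $N$: the terms $2M$ and $2\lfloor M/P\rfloor P$ are bounded as $N\to\infty$, so $T_P = O\!\bigl((2+2\lfloor M/P\rfloor)N\bigr) = O\!\bigl((1+\lfloor M/P\rfloor)N\bigr)$. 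Dividing $T_1=2(M+1)N = O\bigl((M+1)N\bigr)$ by this expression, the factor $N$ cancels and one is left with
\begin{equation*}
S_P = \frac{T_1}{T_P} = O\!\left(\frac{M+1}{1+\lfloor M/P\rfloor}\right)
\qquad\text{as } N\to\infty,
\end{equation*}
which is exactly~(\ref{tag6}).

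I do not anticipate a genuine obstacle here, since all the heavy lifting — the summation of the floor terms and the algebraic simplification to~(\ref{tag5}) — was already carried out in the proof of Lemma~1; this lemma is essentially a corollary. The only point requiring a little care is making the asymptotic regime precise: one must fix $M$ (or at least keep $M=o(N)$ so that $L_1=M+1$ eventually) before letting $N\to\infty$, and then argue that the $N$-independent contributions $2M$ and $2P\lfloor M/P\rfloor$ are absorbed into the lower-order part of the $O(\cdot)$ while the coefficient of the surviving linear-in-$N$ term is $2(1+\lfloor M/P\rfloor)$. With that observed, the cancellation of $N$ in the ratio $T_1/T_P$ is immediate and the stated bound follows.
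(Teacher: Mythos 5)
Your proposal is correct and follows essentially the same route as the paper: specialize $L_1=M+1$, $L_2=N$ for fixed $M$, substitute into the exact expression~(\ref{tag5}) to get $T_P\sim 2N(1+\lfloor M/P\rfloor)$, and divide $T_1=2(M+1)N$ by it. The only cosmetic difference is that the paper keeps the two-sided asymptotic $\sim$ for $T_P$ before forming the ratio (which is the cleaner way to justify dividing), whereas you phrase the intermediate step as a one-sided $O(\cdot)$ bound; at the paper's level of rigor this is immaterial.
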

\begin{proof} To evaluate the speedup which is defined as
\begin{equation}
S_{P} = T_{1}/T_{P},
\label{tag7}
\end{equation}
first note that $  T_{1} = 2(M+1)N $.

Let us examine $  T_{P} $. Assuming $  M  $ to be fixed, and
$  N \to \infty $, we obtain
$$ L_{1} = \min(M+1,N) = M+1, \qquad L_{2} = \max(M+1,N) = N. $$
In that case, from (\ref{tag5}) we have
\begin{align*}
T_{P} & = 2(M+N)+2\left\lfloor\frac{M}{P}\right\rfloor
          \left(M+N-P-P\left\lfloor\frac{M}{P}\right\rfloor\right) \\
      & \sim 2N(1+\lfloor M/P\rfloor) \qquad \text{as $  N \to \infty $}.
\end{align*}
Finally, substitution of this expression together with that for $  T_{1}  $
into (\ref{tag7}) leads us to the desired result.
\end{proof}

\begin{corollary}
For a tandem system with $  M  $ servers, Algorithm~3
using $  P = M+1  $ processors achieves linear speedup as the number of
customers $  N \to \infty $.
\end{corollary}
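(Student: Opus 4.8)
The plan is to obtain the corollary as an immediate specialization of Lemma~2, so the argument will be short. Recall that Lemma~2 gives, for fixed $M$ and $N\to\infty$, the speedup
\[
S_{P} = O\left(\frac{M+1}{1+\lfloor M/P\rfloor}\right).
\]
The only thing that has to be done is to evaluate the denominator at the particular value $P=M+1$.

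First I would note the elementary fact that $0\le M < M+1$, and hence $0\le M/(M+1) < 1$, so that $\lfloor M/(M+1)\rfloor = 0$. Substituting $P=M+1$ into the formula above then yields
\[
S_{M+1} = O\left(\frac{M+1}{1+0}\right) = O(M+1),
\]
and since the number of processors employed is exactly $P = M+1$, this is linear speedup in $P$ as $N\to\infty$, which is the claim.

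Alternatively, and perhaps more transparently, one can argue directly from the asymptotic expression for $T_{P}$ derived inside the proof of Lemma~2. With $M$ fixed and $N\to\infty$ one has $T_{P}\sim 2N\bigl(1+\lfloor M/P\rfloor\bigr)$; putting $P=M+1$ again kills the floor term, so $T_{M+1}\sim 2N$. Combining this with $T_{1}=2(M+1)N$ in the definition $S_{P}=T_{1}/T_{P}$ gives $S_{M+1}\sim M+1 = P$, i.e.\ linear speedup.

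There is essentially no obstacle here: the whole content reduces to the observation that $\lfloor M/(M+1)\rfloor$ vanishes, so that adding the $(M+1)$st processor removes the penalty term in $T_{P}$ entirely. The one point worth stating explicitly is that the asymptotics of Lemma~2 are taken with $M$ held fixed and $N\to\infty$, so "linear speedup'' is to be understood in that regime, which is exactly the hypothesis of the corollary.
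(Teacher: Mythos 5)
Your proposal is correct and follows the paper's own route: the corollary is obtained by substituting $P=M+1$ into the speedup formula of Lemma~2 and observing that $\lfloor M/(M+1)\rfloor=0$, giving $S_{P}=O(M+1)=O(P)$. You merely spell out the floor computation (and an equivalent derivation via $T_{P}\sim 2N$) that the paper leaves implicit.
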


\begin{proof}
It follows from (\ref{tag6}) that with $  P=M+1  $ the speedup
$  S_{P} = O(P)  $ as $  N \to \infty $.
\end{proof}

\bibliographystyle{utphys}

\bibliography{Efficient_algorithms_for_tandem_queueing_system_simulation}

\providecommand{\href}[2]{#2}\begingroup\raggedright\begin{thebibliography}{1}

\bibitem{1}
L.~Chen and C.-L. Chen, \href{http://dx.doi.org/10.1109/WSC.1990.129572}{``A
  fast simulation approach for tandem queueing systems,''} in {\em 1990 Winter
  Simulation Conference Proceedings}, O.~Balci, R.~P. Sadowski, and R.~E.
  Nance, eds., pp.~539--546.
\newblock IEEE, 1990.

\bibitem{2}
A.~G. Greenberg, B.~D. Lubachevsky, and I.~Mitrani, ``Algorithms for
  unboundedly parallel simulations,''
  \href{http://dx.doi.org/10.1145/128738.128739}{{\em ACM Trans. Comput. Syst.}
  {\bfseries 9} no.~3, (1991) 201--221}.

\bibitem{3}
S.~M. Ermakov, {\em Die Monte-Carlo-Methode und verwandte Fragen}.
\newblock VEB Deutscher Verlag der Wissenschaften, Berlin, 1975.

\bibitem{4}
N.~K. Krivulin, {\em Optimization of Discrete Event Dynamic Systems by Using
  Simulation, PhD Dissertation}.
\newblock St.~Petersburg University, St.~Petersburg, 1990.
\newblock (in Russian).

\bibitem{5}
N.~Krivulin, ``Unbiased estimates for gradients of stochastic network
  performance measures,'' \href{http://dx.doi.org/10.1007/BF00995493}{{\em Acta
  Appl. Math.} {\bfseries 33} (1993) 21--43},
  \href{http://arxiv.org/abs/1210.5418}{{\ttfamily arXiv:1210.5418 [math.OC]}}.

\end{thebibliography}\endgroup

\end{document}